\newtheorem{theorem}{Theorem}[section]
\newtheorem{proposition}[theorem]{Proposition}
\newtheorem{remark}[theorem]{Remark}
\newcommand{\ett}{{\bf 1}} 
\newcommand{\mR}{{\mathbb R}}
\DeclareMathOperator*{\argmin}{argmin} 
\DeclareMathOperator*{\ve}{vec}
\title{A cutting plane algorithm for globally solving\\ low dimensional k-means clustering problems
}
\author{
  Martin Ryner, Jan Kronqvist, Johan Karlsson \\
  Division of Numerical Analysis, Optimization and Systems Theory,\\ Department of Mathematics, \\KTH Royal Institute of Technology, Stockholm, Sweden\\
  \texttt{martinrr@kth.se, jankr@kth.se, johan.karlsson@math.kth.se} \\
}
\begin{document}
\maketitle

\begin{abstract}
Clustering is one of the most fundamental tools in data science and machine learning, and k-means clustering is one of the most common such methods. There is a variety of approximate algorithms for the k-means problem, but computing the globally optimal solution is in general NP-hard. 
In this paper we consider the k-means problem for instances with low dimensional data 
and formulate it as a structured concave assignment problem. This allows us to exploit the low dimensional structure and solve the problem to global optimality within reasonable time for large data sets with several clusters. 
The method builds on iteratively solving a small concave problem and a large linear programming problem. This gives a sequence of feasible solutions along with bounds which we show converges to zero optimality gap.
The paper combines methods from global optimization theory to accelerate the procedure, and we provide numerical results on their performance.
\end{abstract}

\keywords{Clustering \and k-means keyword \and Global optimization \and Cutting plane method}

\section{Introduction}

\subsection{Background}

Clustering is a fundamental problem in the field of machine learning and data analysis for partitioning a dataset in order to discover underlying patterns or structures. One of the most widely used models is the k-means problem \cite{macqueen1967some,lloyd1982least}, which builds on describing the data as well as possible using centroids. More specifically, for a data set $\{x_i\}_{i = 1}^n \subset \mathbb{R}^d$, and a specified number of clusters $k$, the goal is to find centroids $\{y_j\}_{j=1}^k \subset \mathbb{R}^d$ that minimize the sum of squared distances between the data points and their respective centroids, i.e.,
\begin{align}
\min_{\{y_j\}_{j=1}^k \subset \mathbb{R}^d} \sum_{i=1}^n \min_{j\in\{1,\ldots, k\}} \|x_i-y_j\|^2.\label{eq:problemformulation}
\end{align}
A commonly used method is the so called k-means algorithm, which iteratively adjusts the locations of the centroids, $\{y_j\}_{j=1}^k$, and reassigns data points to the nearest centroid until convergence. 
This was introduced by Lloyd in 1957 (however, first published in \cite{lloyd1982least}) and guarantees that the objective function in \eqref{eq:problemformulation} is non-increasing in each step. However, as pointed out in the original article, the method is not aiming or shouldn't be considered to be a near optimal solution to the problem, but to give the user suggestions about how their data is organized. 

Recent development of local methods involves smart seeding of the centroid positions. One widely used such method is the k-means++ algorithm \cite{arthur2007k}, which has a $\mathcal{O}(\log k)$ multiplicative error bound to the optimal solution. However, there is also a theoretical bound \cite{kanungo2002local} of how good such local search methods may become, which motivates research on global methods. 

\subsection{Contributions}

In this paper we propose an algorithm that globally solves the k-means problem and which is computationally feasible for small $k$ (\# clusters) and $d$ (data dimension). 
The main contributions in this paper can be summarized as follows:
 \begin{itemize}

\setlength{\itemsep}{2pt}
\setlength{\parskip}{2pt}
\item We formulate the $k$-means problem as a concave assignment problem with dimension depending on $k$ and $d$.
\item We propose a new method to solve the k-means problem \eqref{eq:problemformulation}, which involves solving a sequence of alternating optimization problems, one which is linear with polynomial complexity in $n$ and one which is low-dimensional if $k$ and $d$ are small. 

\item We give a proof showing that the solution converges to a global optimal solution. Furthermore, the method provides an optimality gap for which the user can terminate the problem at a given accuracy and retrieve a solution.
\item We explore  features such as symmetry breaking, various cuts and integer constraints in order to accelerate the algorithm.
\item We illustrate the results of the method numerically and compare with state of the art global optimizers.
\end{itemize}

\subsection{Related work}
Modern implementations of the k-means algorithm often incorporate optimizations and enhancements to improve efficiency and performance. Some of the algorithms used today are Lloyd's Algorithm \cite{lloyd1982least}, 
Mini-Batch K-Means \cite{sculley2010web} and K-Means++ initialization \cite{arthur2007k}. More recent work include \cite{elkan2003using,aggarwal2009adaptive,matouvsek2000approximate,lattanzi2019better,ahmadian2019better,kanungo2002local} which aim to improve the understanding, error bounds, performance and scale. The intense research of methods to solve the problem and the vast amount of publications where it is used shows that this idea of dividing experimental data is popular.

The existing algorithms cater to various data sizes, computational resources, and efficiency requirements. The choice of algorithm depends on the specific problem, data characteristics, and available resources. While the core k-means concept remains the same, these modern variations have helped address some of the limitations and challenges of the original algorithm.

The original Lloyd's algorithm and its variations converge to a local minimum. However, the quality of the obtained solution heavily depends on the initial placement of centroids. Poor initialization might lead to suboptimal solutions or slow convergence. K-means++ initialization, introduced to mitigate this initialization problem, improves convergence guarantees in practice by selecting initial centroids that are well spread out in the data space. This initialization technique provides probabilistic bounds on the quality of the solution in terms of the approximation factor compared to the optimal solution. More specifically, Kanungo \cite{kanungo2002local} showed a multiplicative $(9+\epsilon)$ error bound, also showing that local search methods utilizing a finite amount of swaps of the assignments cannot improve the approximation below $(9-\epsilon)$ bounds. Ahmadian et al. \cite{ahmadian2019better} [Theorem 3.4] showed a multiplicative error bound of 6.3574 to the globally optimal solution. On the same topic, Lattanzi \cite{lattanzi2019better} [Theorem 1] showed a local search method that, when the gap to optimality is large enough, there is a probability to find a better solution, resulting in a solution in par with the globally optimal in $\mathcal{O}(dnk^2\log \log k)$ iterations. Lee et al. \cite{lee2017improved} showed that it is NP-hard to solve the problem below a multiplicative error of $1.0013$.

To determine a proper $k$ for the data at hand, there are several methods to compare the result when varying the parameter, such as the elbow method \cite{thorndike1953belongs} or the Silhouette method \cite{rousseeuw1987silhouettes}. These methods can help the analyst to determine which clustering is the most appropriate with their respective objective function. However, since the above mentioned algorithms are not guaranteed to find the global optimum for each $k$, it becomes a challenge to compare the result between different parameter settings.
\subsection{Introduction to the method and structure of the paper}

The main idea that this paper builds on is by rewriting the problem as a low dimensional concave assignment problem, described in Section~\ref{sec:variableclustersize}. Since a concave objective function attains its minimum at an extreme point of the bounding feasible set, we may relax the set of binary assignment matrices to real valued matrices with the same marginal constraints, without changing the optimal value or solution.

Even though the problem can be formulated in a low dimensional space, the feasible set in this image space becomes increasingly complex with increasing $n$. The idea is then to approximate this set with linear constraints where necessary, for which the problem is solvable in tractable time, see illustration in Figure~\ref{fig:outerexample}.

\begin{figure}[ht]
\vskip 0.2in
\begin{center}
\includegraphics[width=0.5\columnwidth]{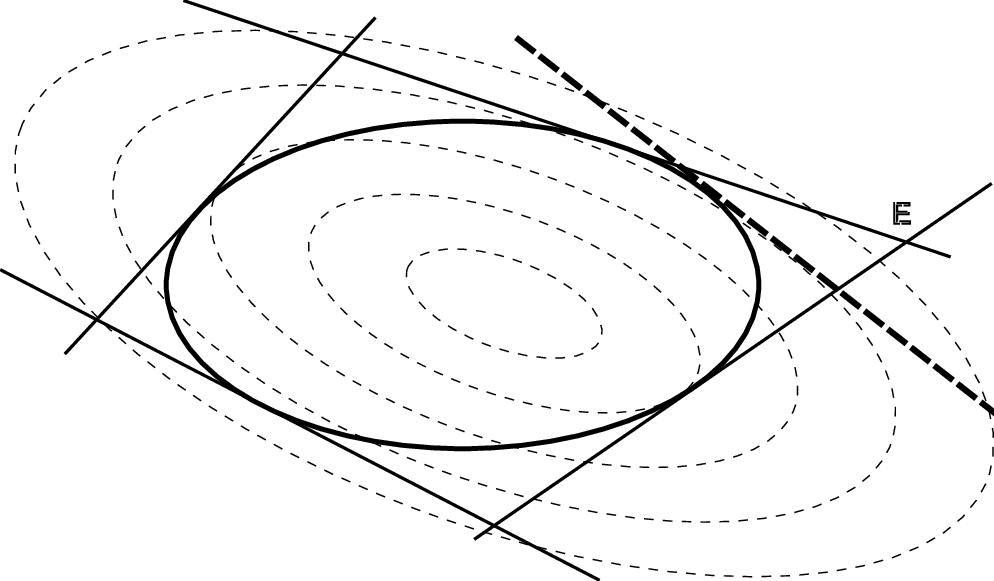}
\caption{A convex set (ellipse drawn with a solid line) is approximated by with linear cutting planes tangential to the convex set. A concave objective function (with level curves described by thin dashed ellipses)  attains its minimum in an extreme point $E$ of the outer linear approximation which is lower than any value in the ellipse. A new constraint (thick dashed line), refines the outer approximation  by removing a part which contains $E$.}
\label{fig:outerexample}
\end{center}
\end{figure}

In the procedure, a lower bound of the problem is found by solving the relaxed  problem. The optimal point in the relaxed problem is then used to find an assignment matrix using linear programming. The least upper bound of the objective in these visited assignment matrices is then an upper bound of the problem. In Section \ref{sec:convergenceproof}, we show in the setting of general smooth concave objective functions that the gap between the upper and lower bounds converge to zero, hence one globally optimal solution is found. 

In Section~\ref{sec:AcceleratingMethods} we introduce methods that accelerate the convergence of the method, and finally, in Section~\ref{sec:examples} we show with examples how the method performs in terms of accuracy and computational complexity.

\section{Problem formulation and proposed algorithm}
\label{sec:variableclustersize}

\subsection{Formulation as a low dimensional concave  problem}

In order to rewrite \eqref{eq:problemformulation} as a concave assignment problem, we first note that it can be written as 
\begin{align}
\min_{\{y_j\}_{j=1}^k\subset \mathbb{R}^d, \Gamma \in \mathcal{A} }\quad \sum_{i=1}^n\sum_{j=1}^k \|x_i-y_j\|^2 \Gamma_{i,j}, \label{eq:problemformulation2}
\end{align}
where we let $\mathcal{A}=\{\Gamma \in \{0,1\}^{n\times k}, \Gamma\ett = \ett, \Gamma^T\ett \ge \ett\}$ be a set of assignment matrices that assigns each point to a centroid.
The objective function
\begin{align}
&\sum_{i=1}^n\sum_{j=1}^k \|x_i-y_j\|^2 \Gamma_{i,j}\nonumber\\
&=\sum_{i=1}^n \|x_i\|^2+\sum_{j=1}^k \left(\sum _{i=1}^n\|y_j\|^2\Gamma_{i,j}-\sum_{i=1}^n 2\langle y_j, x_i\rangle \Gamma_{i,j}\right)\nonumber\\
&=c_0+\sum_{j=1}^k \left(\|y_j\|^2 \ett^T\Gamma e_j-2\langle y_j, X \Gamma e_j\rangle \right)\nonumber\\
&{=c_0+\sum_{j=1}^k \Big(\ett^T\Gamma e_j\|y_j - X \Gamma e_j/(\ett^T\Gamma e_j) \|^2}\nonumber\\&\qquad \qquad\qquad {-\|X\Gamma e_j\|^2/(\ett^T\Gamma e_j)\Big)}\label{eq:objective}
\end{align}
where $c_0=\sum_{i=1}^n \|x_i\|^2$ and 
$X=\begin{pmatrix} x_1,\ldots, x_n
\end{pmatrix}\in \mR^{d\times n}$, and thus the minimizing ${y_j}'s$ can be expressed explicitly as 
$y_j=X\Gamma e_j/(\ett^T\Gamma e_j)$. By substituting this into \eqref{eq:objective}, we get that \eqref{eq:problemformulation2} can be reformulated as  
\begin{align}
\min_{\Gamma \in \mathcal{A}} \quad  c_0-\sum_{j=1}^k\|X\Gamma e_j\|^2/(\ett^T\Gamma e_j).
\label{eq:relaxedformulation1}
\end{align}
Next, note that each term in the sum in \eqref{eq:relaxedformulation1} is a composition of a linear function and a norm on the form $h(\alpha, \beta):=\|\alpha\|^2/\beta$. Since $h(\alpha, \beta)$ is jointly convex in $\alpha\in \mR^n$ and $\beta\in \mR_+$ (\cite[p.89]{boyd2004convex}), it follows that the objective of \eqref{eq:relaxedformulation1} is concave function.     

Now, consider the relaxed problem, where the feasible set $\mathcal{A}$ is extended to its convex hull
\begin{align*}
\overline{\mathcal{A}}:=\{\Gamma \in [0,1]^{n\times k}, \Gamma\ett = \ett, \Gamma^T\ett \ge \ett\}.
\end{align*} 
Since the objective is concave, this does not change the minimum value. We have thus shown the following proposition. 
\begin{proposition}
The k-means problem \eqref{eq:problemformulation2} is equivalent to the concave problem
\begin{align}
\min_{\Gamma \in \overline{\mathcal{A}}}  \quad c_0-\sum_{j=1}^k\|X\Gamma e_j\|^2/(\ett^T\Gamma e_j)
\label{eq:relaxedformulation}
\end{align}
in the sense that there is a $\Gamma\in \mathcal{A}$ that is minimizes both problems, and their corresponding minimum values coincide. 
\end{proposition}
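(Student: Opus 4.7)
The plan is to reduce the equivalence to showing that $\overline{\mathcal{A}} = \mathrm{conv}(\mathcal{A})$, after which the proposition follows from Jensen's inequality applied to the concave objective (whose concavity was already established in the excerpt). The easy inclusion $\mathrm{conv}(\mathcal{A}) \subseteq \overline{\mathcal{A}}$ is immediate since $\overline{\mathcal{A}}$ is a convex superset of $\mathcal{A}$. For the reverse, since $\overline{\mathcal{A}}$ is a bounded polytope, by the Minkowski/Krein--Milman theorem it suffices to show every extreme point of $\overline{\mathcal{A}}$ is a $\{0,1\}$-matrix and hence belongs to $\mathcal{A}$.

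Granted the extreme-point characterization, the finish is short. Let $f$ denote the objective in \eqref{eq:relaxedformulation}, and let $\Gamma^\star \in \overline{\mathcal{A}}$ minimize $f$ over $\overline{\mathcal{A}}$. Write $\Gamma^\star = \sum_\ell \lambda_\ell \Gamma^{(\ell)}$ with $\Gamma^{(\ell)}\in \mathcal{A}$, $\lambda_\ell \ge 0$, and $\sum_\ell \lambda_\ell = 1$. Concavity of $f$ gives $f(\Gamma^\star) \ge \sum_\ell \lambda_\ell f(\Gamma^{(\ell)}) \ge \min_\ell f(\Gamma^{(\ell)})$, so some $\Gamma^{(\ell)} \in \mathcal{A}$ attains a value no larger than $f(\Gamma^\star)$. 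Since $\mathcal{A}\subseteq\overline{\mathcal{A}}$ the reverse inequality is automatic, so the two minima coincide and a common minimizer can be chosen in $\mathcal{A}$.

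The main work, and the step I expect to be the principal obstacle, is the extreme-point characterization, because the column-sum constraints $\Gamma^T\ett \ge \ett$ are inequalities and require some care. My approach is a bipartite-graph perturbation argument: suppose $\Gamma \in \overline{\mathcal{A}}$ has some entry in $(0,1)$, and form the bipartite graph $G$ on row- and column-indices with an edge for each fractional entry. Since every row sums to $1$, a row containing any fractional entry must contain at least two, so every row-vertex of $G$ has degree $\ge 2$, and consequently every leaf of $G$ is a column. For a leaf column $j$, the unique fractional entry there cannot by itself reach $1$, so at least one other entry in column $j$ equals $1$; hence the column-sum inequality is strictly slack at every leaf column.

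Now I would split into two cases. If $G$ contains a cycle, perturbing $\Gamma$ by $\pm\varepsilon$ alternately along the cycle preserves every row and column sum and keeps entries in $[0,1]$ for small $\varepsilon$, producing $\Gamma\pm\varepsilon E \in \overline{\mathcal{A}}$ and contradicting extremality. If $G$ is a forest, any nontrivial tree component has at least two leaves, all of which are columns; I would then pick a path joining two leaf columns and again perturb by $\pm\varepsilon$ alternately along this path. Row sums and all interior column sums are preserved, and only the two endpoint leaf columns see a net $\pm\varepsilon$ change, which is admissible precisely because their column-sum inequalities are slack. Either case contradicts extremality of $\Gamma$, completing the argument.
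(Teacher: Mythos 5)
Your proposal is correct and follows the same overall route as the paper: the objective is concave, so its minimum over the polytope $\overline{\mathcal{A}}$ is attained at an extreme point, and the extreme points are binary assignment matrices, so relaxing $\mathcal{A}$ to $\overline{\mathcal{A}}$ changes neither the optimal value nor the existence of a binary optimizer. The one place you go beyond the paper is the step the paper silently assumes: the text simply declares $\overline{\mathcal{A}}$ to be ``the convex hull'' of $\mathcal{A}$ and concludes immediately, whereas you actually prove the integrality of the extreme points. Your bipartite perturbation argument is sound, including the delicate point that the inequality constraints $\Gamma^T\ett\ge\ett$ are strictly slack at every leaf column of the fractional-support graph (since the single fractional entry there cannot reach $1$ alone, some other entry in that column must equal $1$), which is exactly what licenses the path perturbation in the forest case. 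So the two arguments buy the same conclusion, but yours supplies the polyhedral justification that the paper leaves implicit.
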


\begin{remark}
The space of assignments can be generalized to a case when each cluster has a least $n_0$ points. We would then consider the set of assignment matrices  $\mathcal{A}_{n_0}=\{\Gamma \in \{0,1\}^{n\times k}, \Gamma\ett = \ett, \Gamma^T\ett \ge n_0\ett \}$. It is straightforward to generalize our method to this case.
\end{remark}

\subsection{Relaxation and iterative scheme}

Note that by moving all the dependence on $\Gamma$ to the constraints the problem \eqref{eq:relaxedformulation} can be (trivially) reformulated as 
\begin{subequations}
\label{eq:relaxed0}
    \begin{align}
\min_{\substack{\{\hat z_j\}_{j=1}^k \subset \mR^{d}\\ \{n_j\}_{j=1}^k \subset \mR, \Gamma}}
&c_0-\sum_{j=1}^k\|\hat z_j\|^2/n_j\label{eq:relaxed0a}\\
\mbox{subject to } \; & 
\begin{pmatrix}  \hat z_j  \\n_j\end{pmatrix}=
\begin{pmatrix}
    X \Gamma e_j\\
    \ett^T \Gamma e_j
\end{pmatrix}, \; j=1,\ldots, k\label{eq:relaxed0b}\\
& \Gamma \in \overline{\mathcal{A}}.\label{eq:relaxed0c}
\end{align}
\end{subequations}
 For convenience of notation we let 
\[
Z=\begin{pmatrix}
    z_1&\ldots & z_k
\end{pmatrix},\quad  z_j=\begin{pmatrix}  \hat z_j  \\n_j\end{pmatrix} \quad \mbox{ and } \quad  \mathcal{X} := \begin{pmatrix}
X\\
\ett^T
\end{pmatrix}
\]
in which case \eqref{eq:relaxed0} becomes
\begin{subequations}
\label{eq:relaxed1}
    \begin{align}
\min_{{Z \in \mR^{(d+1)\times k}}, \Gamma}  \; &c_0-\sum_{j=1}^k\|C z_j\|^2/(Dz_j)\label{eq:relaxed1a}\\
\mbox{subject to } \; & {Z  =\mathcal{X}\Gamma}\\
& \Gamma \in \overline{\mathcal{A}}\label{eq:relaxed1c}
\end{align}
\end{subequations}
where $C:= \begin{pmatrix}
I_{d\times d} & 0_{d\times 1}
\end{pmatrix}$ and $D := \begin{pmatrix}
0_{1\times d} &1
\end{pmatrix}$. 

The set $\mathcal{F}:=\{\mathcal{X}\Gamma\mid \Gamma \in \overline{\mathcal{A}}\}$ may contain exponentially many (in $n$) extreme points and facets seen by counting the basic feasible solutions of $\overline{\mathcal{A}}$. Therefore, we consider the relaxation of \eqref{eq:relaxed1}, in terms of the constraints as  
\begin{subequations}
\label{eqn:relaxedvar}
\begin{align}
\min_{{Z \in \mathbb{R}^{(d+1)\times k}}} \quad &c_0 - \sum_{j=1}^k \|Cz_j\|^2/(Dz_j)\\
\mbox{subject to } \qquad &\langle A_r, Z\rangle \le b_r, r = 1,\ldots, N.
\label{eqn:relaxedpoly}
\end{align}
\end{subequations}

Even though \eqref{eqn:relaxedvar} is a nonconvex problem, it is low-dimensional and can be solved when $k$ and $d$ are small. The main approach then builds on iteratively approximating and refining the feasible  set $\mathcal{F}$ using constraints on the form \eqref{eqn:relaxedpoly}. To this end, we first select constraints so that \eqref{eqn:relaxedpoly} is an outer approximation of $\mathcal{F}$. Then, we solve \eqref{eqn:relaxedvar} which gives a lower bound of the problem. To refine the lower bound, we find an assignment matrix which gives an upper bound of the problem and that defines a cutting plane which refines the approximation of the feasible set. The detailed steps are described next, and the procedure is described in pseudo code in Algorithm \ref{alg:cap_new}.

To initially construct an outer approximation of $\mathcal{F}$ we consider a simplex described by the constraints
\begin{align}
  & \min_{\Gamma \in \overline{\mathcal{A}}} \left(\mathcal{X}\Gamma\right)_{ij} \le Z_{ij} \mbox{ for } i=1,..., d+1 \mbox{ and } j=1,..., k,\nonumber\\
   &  \langle \ett_{(d+1)\times k}, Z\rangle \le \max_{\Gamma \in \overline{\mathcal{A}}} \langle \ett_{(d+1)\times k}, \mathcal{X}\Gamma   \rangle.     \label{eq:boundingsimplex_2}
\end{align}
Geometrically, this simplex is described by lower bounds in every element of $Z$ and a ``diagonal'' hyperplane describing the upper bound. 

To solve \eqref{eqn:relaxedvar}, we use a half plane/vertex representation of the polytope \eqref{eqn:relaxedpoly} which effectively can be refined with new linear constraints. This method was used in \cite{ryner2023globally} to represent polytopes in order to solve a low rank concave quadratic assignment problem.

Before we continue, we note that the terms in the sum of the objective of \eqref{eqn:relaxedvar}
can be formulated using
\begin{align*}
f(z_j):= - \|Cz_j\|^2/(Dz_j).    
\end{align*}
Assuming that the optimal point to \eqref{eqn:relaxedvar} is $Z_{N}=\begin{pmatrix}
    z_1^{N} & \cdots & z_k^{N}
\end{pmatrix}$, 
we use the local gradient of the objective in this point,
\begin{align}
\mathcal{D}_{N}:= \begin{pmatrix}
\nabla f(z_1^{N}) & \cdots  & \nabla f(z_k^{N})
\end{pmatrix}
\label{eqn:localgradient}
\end{align}
where $\nabla f(z) = -\left( 2C^TC z/(Dz)- \frac{\|Cz\|^2}{(Dz)^2}D^T\right)$ to define a cutting plane. In particular, we let the normal vector of the cutting plane, $A_{N+1} := \mathcal{D}_{N}$, be the gradient and let 
\begin{align}
\label{eq:get_b}
 b_{N+1}:= \min_{\Gamma \in \overline{\mathcal{A}}}  \langle  \mathcal{D}_{N},\mathcal{X}\Gamma\rangle,   
\end{align}
and we let $\Gamma_{N}\in \mathcal{A}$ be an optimal solution to \eqref{eq:get_b}. 
By this construction, the cutting plane separates $Z_{N}$ from $\mathcal{F}$ and intersects $\mathcal{F}$, in particular 
\begin{align*}
\langle \mathcal{D}_{N} ,\mathcal X \Gamma -\mathcal{X}\Gamma_{N} \rangle \ge 0 \mbox{ for all } \Gamma \in \overline{\mathcal{A}}.
\end{align*} 
To summarize, the procedure has the following properties:
\begin{itemize}
\setlength{\itemsep}{2pt}
\setlength{\parskip}{2pt}

\item It finds an assignment matrix $\Gamma_{N}$ which gives an upper bound for the k-means problem.
\item If $Z_{N}\notin \mathcal{F}$, then the cutting plane makes $Z_{N}$ infeasible, and the constraint is added to \eqref{eqn:relaxedpoly}. 
\item If $Z_{N}\in \mathcal{F}$, then the point is optimal and $\Gamma_{N}$ is optimal to the k-means problem \eqref{eq:relaxedformulation}. 
\end{itemize}

\newcommand{\LB}{L_{\rm bound}}
\newcommand{\UB}{U_{\rm bound}}
\begin{algorithm}[H]
\caption{Global k-means algorithm}\label{alg:cap_new}
\begin{algorithmic}
\State \textbf{Input}  $X\in \mR^{d\times n}$, $\epsilon>0$ \hfill (Define point cloud and give tolerance level) 
\State $\LB \gets -\infty$, and $\UB \gets \infty$ \hfill(Set lower and upper bounds)
\State $(A_r, b_r)$ for $r=1,\ldots, 1+(d+1)k$ from \eqref{eq:boundingsimplex_2} \hfill (Set initial constraints)
\Repeat \\
\State $Z_N$ $\gets$ Optimal solution to \eqref{eqn:relaxedvar} \hfill (Solve \eqref{eqn:relaxedvar})
\State $\LB\gets c_0 -\sum_{j=1}^k \|Cz_{j,N}\|^2/(Dz_{j,N})$ \hfill (Update lower bound)
\State $\Gamma_N$ $\gets$ Optimal solution to \eqref{eq:get_b} \hfill (Solve \eqref{eq:get_b}) 
\State $\hat{Z} \gets \mathcal{X}\Gamma_N$
\State $\UB\!\!\gets\!\!\min(\UB,\!c_0\!-\sum_{j=1}^k\! \|C\hat{z}_{j,N}\|^2/(D\hat{z}_{j,N}))$ \hfill (Update upper bound)
\State $(A_{N+1},b_{N+1}) \gets $ Optimal solution to \eqref{eq:get_b} \hfill (Calculate new constraints from  \ref{eq:get_b})
\State $N\gets N+1$ \hfill (Update iteration number)
\Until{$\UB - \LB < \epsilon$}
\end{algorithmic}
\end{algorithm}

\section{Convergence}
\label{sec:convergenceproof}

The main result considering convergence is presented in the following theorem.
\begin{theorem} \label{thm:convergence} 
The gap between the upper bound and lower bound  in the algorithm converges to $0$ (if the tolerance is $\epsilon=0$).
\end{theorem}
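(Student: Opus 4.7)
The strategy will be to combine monotonicity of the bounds with a subsequential limit argument that exploits both the cutting plane construction and the concavity of the objective.

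First, I would record the bookkeeping facts. The iterates $Z_N$ lie inside the initial bounding simplex \eqref{eq:boundingsimplex_2}, hence form a bounded sequence, while $\hat Z_N \in \mathcal{F}$ which is a compact polytope. Each new cut refines the outer approximation, so the polyhedra $\mathcal{P}_N$ defined by \eqref{eqn:relaxedpoly} satisfy $\mathcal{P}_{N+1} \subseteq \mathcal{P}_N \supseteq \mathcal{F}$. Consequently $\LB_N = F(Z_N)$ is nondecreasing and the running upper bound is nonincreasing, so the gap $\UB - \LB$ is monotone nonincreasing. It therefore suffices to show that the gap tends to zero along some subsequence. I would also note that the simplex constraint enforces $Dz_j \geq \min_{\Gamma \in \overline{\mathcal{A}}} \ett^T \Gamma e_j \geq 1$, which is preserved under refinement and so keeps the iterates uniformly bounded away from the singularity $Dz_j = 0$ of $f$.

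Second, I would pass to a common convergent subsequence: by compactness, there exist indices $N_k$ with $Z_{N_k} \to Z^*$ and $\hat Z_{N_k} \to \hat Z^* \in \mathcal{F}$. Since $\nabla f$ is continuous on the relevant domain, the gradient matrices $\mathcal{D}_{N_k}$ converge to $\mathcal{D}^* = [\nabla f(z_1^*),\ldots,\nabla f(z_k^*)] = \nabla F(Z^*)$. Now for any $l > k$, the iterate $Z_{N_l}$ is feasible for $\mathcal{P}_{N_k+1}$ and so obeys the cut installed at step $N_k$:
\[
\langle \mathcal{D}_{N_k}, Z_{N_l}\rangle \geq b_{N_k+1} = \langle \mathcal{D}_{N_k}, \hat Z_{N_k}\rangle.
\]
Letting $l \to \infty$ and then $k \to \infty$ yields $\langle \mathcal{D}^*, Z^* - \hat Z^*\rangle \geq 0$. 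Separately, since $\hat Z_{N_k}$ attains $\min_{Z \in \mathcal{F}} \langle \mathcal{D}_{N_k}, Z\rangle$ by \eqref{eq:get_b}, passing to the limit gives $\hat Z^* \in \argmin_{Z \in \mathcal{F}} \langle \mathcal{D}^*, Z\rangle$.

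Third, I would close the argument with concavity. Since $F$ is concave with $\nabla F(Z^*) = \mathcal{D}^*$,
\[
F(\hat Z^*) \leq F(Z^*) + \langle \mathcal{D}^*, \hat Z^* - Z^*\rangle \leq F(Z^*).
\]
On the other hand, $F(Z^*) = \lim_k \LB_{N_k} \leq F^*$ (the true optimum), while $F(\hat Z^*) \geq F^*$ because $\hat Z^* \in \mathcal{F}$. These inequalities force $F(\hat Z^*) = F(Z^*) = F^*$, so the gap vanishes along the subsequence, and by monotonicity the full sequence of gaps tends to $0$.

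The main obstacle I expect is the joint handling of the two convergent subsequences: one must carefully order the limits (first $l \to \infty$ with $k$ fixed, then $k \to \infty$) so that the cut inequality survives both passes, and one must invoke continuity of $\nabla f$ at $Z^*$, which is why the uniform lower bound $Dz_j \geq 1$ needs to be established explicitly from the initial simplex constraints. Once that bookkeeping is in place, the rest is a clean application of concavity and the minimax characterization of the linear subproblem.
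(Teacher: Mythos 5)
Your proof is correct and rests on exactly the same three ingredients as the paper's: the cut inequality $\langle \mathcal{D}_{N}, Z-\hat{Z}\rangle \ge 0$ satisfied by all later iterates, the first-order concavity inequality $F(\hat{Z})\le F(Z_N)+\langle \nabla F_{Z_N},\hat{Z}-Z_N\rangle$, and compactness of the iterate sequence. The only difference is packaging: the paper argues by contradiction (a persistent gap $\epsilon_N\ge\epsilon$ would force any two iterates to be at distance at least $\epsilon/\max_{Z\in B}\|\nabla F_Z\|$ apart, contradicting the existence of a convergent subsequence), whereas you extract a convergent subsequence directly and pass to the limit; along the way you make explicit two points the paper leaves implicit, namely the monotonicity of the reported gap and the bound $Dz_j\ge 1$ inherited from the initial simplex, which is what keeps $\nabla F$ bounded and continuous on the region visited by the algorithm.
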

\begin{proof}
Consider the $N$th iteration. Let $Z_N$ be an optimal solution to the relaxed problem \eqref{eqn:relaxedvar}, and $\Gamma_N$ is an optimal solution to the linearized problem \eqref{eq:get_b}
with corresponding point $\hat Z=\mathcal{X}\Gamma_N$. 
Note that the new constraint is given by
\begin{align}
\label{eq:constrainteq}
\langle \nabla F_{Z_N},Z-\hat{Z}\rangle \ge 0,
\end{align}
where $F(Z) =c_0-\sum_j \|Cz_j\|/D^T z_j$
is the objective function. Further, due to concavity of $F$ we have
\begin{align}
\label{eq:taylorConvex}
F(\hat{Z}) \le F(Z_N) + \langle \nabla F_{Z_N} ,\hat{Z}-Z_N\rangle.
\end{align}

Next, assume that the gap in the objective function between $\hat{Z}$ and $Z_N$ is
\begin{align}\label{eq:gap}
\epsilon_N = F(\hat{Z})-F(Z_N).
\end{align}
Then, by plugging in \eqref{eq:taylorConvex} and \eqref{eq:constrainteq} into \eqref{eq:gap} we obtain
\begin{align*}
\epsilon_N &= F(\hat{Z})-F(Z_N)\\
&\le \langle \nabla F_{Z_N},\hat{Z}-Z_N\rangle\\
&= \langle \nabla F_{Z_N},\hat{Z}-Z + Z - Z_N\rangle\\
&= \langle \nabla F_{Z_N},Z-Z_N\rangle + \langle \nabla F_{Z_N},\hat{Z}-Z\rangle\\
&\le \langle \nabla F_{Z_N},Z-Z_N\rangle \\
&\le \|\nabla F_{Z_N}\|_2\|Z-Z_N\|_2
\end{align*}
where we in the last step have used the Cauchy-Schwarz inequality.  

For any iteration number $N+k$ with $\ell>0$, we have that $Z_{N+\ell}$ is feasible for $\langle A_{N+1}, Z \rangle \le b_{N+1}$, and thus the Euclidean distance between $Z_N$ and $Z_{N+\ell}$ is at least $\epsilon_N / \|\nabla F_{Z_N}\|_2$.
If the gap in the algorithm does not converge to $0$, then there is an $\epsilon>0$ for which $\epsilon_N\ge \epsilon$ for all $N$ and thus the distance between any two points in the sequence $\{z_{N}\}_N$ is bounded from below by 
$\epsilon/(\max_{Z \in B}\ \|\nabla F_{z}\|)$, where $B$ is the initial bounding simplex for the relaxed problem.
However, since the infinite sequence of points $\{Z_{N}\}_N$ belong to a bounded set defined by $B$, there must be a convergent subsequence, which contradicts that 
there is a positive lower bound on the distance between any two points. 

Since assuming that there is a lower bound $\epsilon>0$ leads to a contradiction, it must hold that the gap between the upper and lower bound converges to zero, and thus the solution converges to a global optimum.
\end{proof}

\section{Accelerating methods}\label{sec:AcceleratingMethods}
In this section we introduce methods that accelerates Algorithm \ref{alg:cap_new}. Some methods are well known in the field of global optimization, and some of the methods utilize the structure of the k-means problem. All methods involve additional cutting planes and fits with the algorithm architecture.

\subsection{One fixed marginal constraint}
Since $\Gamma \ett=\ett$ and $Z=\mathcal{X}\Gamma$ we obtain that $Z1 = \mathcal{X}1$. This reduces the problem with $d+1$ dimensions and can be implemented as $\langle A_j,Z\rangle = b_j$ where the matrix elements $A_j-e_j\ett^T$ and $b_j = (\mathcal{X}1)_j$.

\subsection{Symmetry breaking}\label{sec:symmetrybreak}
Symmetry breaking is a known an powerful tool in global optimization, see for instance \cite{walsh2006general,hojny2019polytopes}. One obvious symmetry is that the ordering of clusters may be arbitrary. To avoid this, one may consider a fixed but arbitrary linear ordering of $z_j$, namely $a^Tz_j\le a^Tz_{j+1}$, which creates $k-1$ constraints. For simplicity, the ordering chosen is $a = \ett$.
\subsection{Branching}

A spatial hyperplane branching can be defined by $B\in \mathbb{R}^{(d+1)\times k}$ and $0 \le \beta \ll 1$, such that, $\langle B,Z\rangle \le \beta$ forms one child node and $\langle B,Z\rangle \ge -\beta$ the other. We can utilize spatial branching to reduce memory consumption when solving the low dimensional concave optimization problem \eqref{eqn:relaxedvar}, by dividing it into two problems. As our method for solving \eqref{eqn:relaxedvar} uses a vertex representation, branching reduces the number of vertexes we need to store in memory during the search. The role of $\beta$ is to have a small overlap of the sets to prevent numerical instability.

Division of the branches was done by diving the set of extreme points in two parts using the constraint $\langle v,Z \rangle \le \langle b,v\rangle$ where $b$ is the mean value of the vertices and $v$ was constructed by analyzing the second moments of the distribution of extreme points, using singular value decomposition, SVD. 

\subsection{Constrained centroid positioning}\label{sec:Convhull}
Consider the convex hull of $\{x_i\}_{i=1}^n$, constituted by the subset of points $\{\hat x_k\}_k$. Since the centroids $Y = X\Gamma \oslash 1^T\Gamma$ is a convex combination of the data points, they must be included in the set described by the convex hull, and we could restrict the centroids to this set. Handling the convex hull is computationally expensive as $n$ and $d$ increases. Let $S_i = \max_j (x_j)_i$ denote the largest value in coordinate $i$ and $s_i = \min_j (x_j)_i$ the smallest value in coordinate $i$. A coarse approximation with the bounding box is a suitable choice,
\begin{align*}
Dz_j s_i \le (z_j)_i  \le Dz_j S_i, \mbox{ for all } i = 1,\ldots,d.
\end{align*}

\subsection{Integer constraints}
The elements $(z_j)_{d+1}\in \{1,\ldots,n-(k-1)\}$ for $j = 1,\ldots,k$, since these elements contain the number of data points in each cluster. Therefore, in every iteration additional pruning of the set. That is, if the extreme points in a branch are $\{\bar{Z}_i\}_i$, we can construct cuts $\lceil \min_i ((\bar{z}_j)_{d+1})_i \rceil \le (z_j)_{d+1} \le \lfloor \max_i ((\bar{z}_j)_{d+1})_i \rfloor$, so that the bounds are integer. In combination with branching, this gave a stronger effect in our experiments.

\subsection{Tight constraints}
By exploring some fundamental properties of LP, we can derive an additional set of valid constraints. A standard LP problem can be written on the form 
\begin{align*}
\min_x\qquad  &c^Tx\\
\mbox {subject to } \; &Ax = b, x\ge 0.
\end{align*}
Suppose $x^*$ is a basic feasible solution to the LP, with basis $B$ (the complement is $B'$). Then by the optimality conditions 
the dual $y = (A_{B}^T)^{-1} c_B$ and $$s_{B'} = c_{B'}-A_{B'}^Ty > 0.$$ Any perturbation in $c_{B'}$ that makes $(s_{B'})_k < 0$ for any element index $k$ is a perturbation that changes the optimal assignment.

The problem \eqref{eq:get_b} can be stated on standard form according to
\begin{align*}
&x = \begin{pmatrix} \ve(\Gamma)\\ x_s\end{pmatrix}, \\
&c = -\begin{pmatrix}\ve(\mathcal{X}^T A_N)\\ 0_k\end{pmatrix},\\
&A = \begin{pmatrix}
\ett_k^T\otimes I_{n\times n} &0_{n\times k}\\
I_{k\times k} \otimes \ett_n^T & -I_{k\times k}
\end{pmatrix}, \\
&b = \begin{pmatrix}
\ett_n\\
\ett_k
\end{pmatrix}
\end{align*}
where $x_s$ are slack variables for the inequality constraint $x_s^T = \ett_n^T \Gamma - \ett^T_k \ge 0$.

A change in the objective in the low dimensional space will change $\partial c = \ve(\mathcal{X}^T \partial A_N)$. Thus, there is a closed form expression of the dual, namely 
\begin{align*}
s_{B'} = V \ve(A_N)
\end{align*}
where $V:=(J_{B'}-A_{B'}^T(A_B^T)^{-1})J_B) \begin{pmatrix}
I_k\otimes \mathcal{X}^T\\ 0_{(m+1)k,k}
\end{pmatrix}$ and $J_{B}$ and $J_{B'}$ are selection matrices for the set $B$ and $B'$ respectively, meaning that $(J_{B})_{i,j}= 1$ if and only if $B_i = j$.
Lets take a subset of the rows in $V$ denoted $B_2$, it is possible to find a vector $t$ such that $V_{B_2} \ve(t) = 0$ and $V_{B'_2}\ve(t) >0$. If this is possible, then $A_N+t$ is an objective for the LP, for which the same $\Gamma_N$ is optimal, but not unique, since we may take an index $i \in B_2$ and add $B'_i$ to $B$ and remove some other base. Which one is directly determined by $\Gamma \ett_k = \ett_n$, i.e., which point that is moved from one cluster to another. Thus it is possible to create a set of new constraints $\partial A_k$ by the optimization problem
\begin{align*}
\min_{s_{B'},\partial A_k} \;& \begin{pmatrix}
\ett^T& 0^T
\end{pmatrix} \begin{pmatrix}
s_{B'}\\
\ve(\partial A_k)
\end{pmatrix}\\
\mbox{subject to }\; &s_{B'} = V\ve(\partial A_k)\\
&\partial A_k^T \partial A_j \le \alpha \mbox{ for } j = 1,\ldots,k-1\\
&s_{B'} \ge 0.
\end{align*}
The tight constrains are then defined by $A_N+\partial A_j$. the role of $\alpha$ is to find new constraints that differs from each other.
\subsection{Local optimum}

Necessary optimality conditions requires that  
\begin{align*}
\Gamma^* = \argmin_{\Gamma \in \overline{\mathcal{A}}} \langle \mathcal{X}^T\mathcal{D^*},\Gamma\rangle
\end{align*}
where $\mathcal{D}^*$ is the gradient of the objective function \ref{eqn:localgradient} in the points $Z^* = \mathcal{X}\Gamma^*$. In each iteration a local search by iterating over $j$

\begin{align*}
\Gamma_{j+1}^* := \argmin_{\Gamma \in \overline{\mathcal{A}}} \langle \mathcal{X}^T\mathcal{D}_j^*,\Gamma\rangle
\end{align*}
until convergence can be applied. A fix point with this sequence is not the same as for the alternating sequence in Lloyds algorithm. However, there is no need for a distance evaluation between all centroids and points. A tight constraint surrounding the local optimum can be added to the approximate cover. 

\subsection{Least squares approximation}

Instead of the half plane cut generated by the LP problem \eqref{eq:get_b}, one may consider a least square fitting by finding the point on the projected assignment matrix closest to the point in the outer approximation by 
\begin{align*}
\min_{z} \qquad &\|z-z_N\|_F^2\\
\mbox{subject to}\;\; &z = \mathcal{X}\Gamma\\
&\Gamma \in \overline{\mathcal{A}}.
\end{align*}
We denote $\hat{z}$ the optimal argument of the above optimization problem. By iteratively adding the half plane described by 
\begin{align*}
\langle z_N-\hat{z},z-\hat{z}\rangle \le 0
\end{align*}
an approximation of the projected assignment matrix is found. Convergence of an outer half plane approximation of a convex set is well known, by iteratively applying the separating hyperplane theorem. Briefly, when $\|\hat{z}-z_N\|=0$ the optimal assignment is found. If $\|\hat{z}-z_N\|\ge \epsilon$, there is a half plane separating the point $z_N$ to $\hat{z}$, namely the plane described above because of the Hahn-Banach theorem. Suppose the distance $\|\hat{z}-z_N\| \ge \epsilon$ for all $N\ge K$, then each point in the infinite sequence has a distance between them. Hence, the sequence does not contain a convergent subsequence, which contradicts that the outer approximation is bounded.

\section{Numerical Results}
\label{sec:examples}

 Here, we present numerical results on the method. The method was implemented in Matlab, on a standard PC where the polytope connectivity was calculated on a Nvidia GTX1660 using OpenCL. The LP problem \eqref{eq:get_b} was solved using Gurobi 10 \cite{gurobi}.
 \subsection{Performance study}

We study a model problem with simulated data by considering samples taken from three normal distributions $\mathcal{N}(0,\sigma)$, $\mathcal{N}((0,2),\sigma)$ and $\mathcal{N}((2,0),\sigma)$, see Figure \ref{fig:illustrative example}. With $\sigma = 1$, this problem has no clear separability of the clusters, making the problem far more challenging.

\begin{figure}[t]
\vskip 0.2in
\begin{center}
\includegraphics[width=1.0\columnwidth]{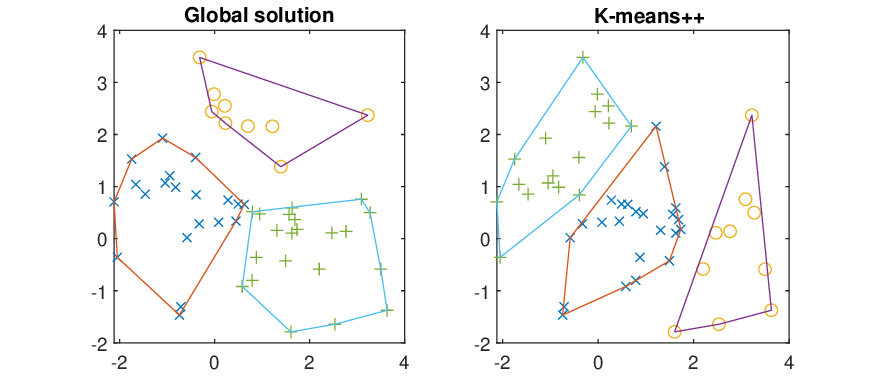}
\caption{A model problem with simulated data. Left: Exact solution to the k-means problem using the proposed algorithm. Right: Approximate solution of the problem using k-means++.}
\label{fig:illustrative example}
\end{center}
\end{figure}

\subsubsection{Performance of accelerating methods}

In this section we describe the effect on the accelerating methods when 50 points were taken from the model problem. Results are shown in Table \ref{table:compeff}. The effect of utilizing the fixed marginal which reduces the problem by $m+1$ dimensions is implemented in all examples. All experiments were run until the relative error on the gap $g$ was $10^{-4}$.
The example show that the accelerating methods makes the performance up to 3 times better.  Also, decisions of when during the sequence a strategy is put into practice affects the overall performance. No branching was performed on this particular problem, as it comes to use for large scale problems where memory consumption becomes a bottle neck.

Some observations on this particular problem include
\begin{itemize}
\setlength{\itemsep}{2pt}
\setlength{\parskip}{2pt}
\item Tight cuts drive computational cost, and should be used carefully at the very end surrounding local optima.
\item The least squares method complements the LP method for coarse cuts, but is redundant in this case to the constrained centroid cuts.
\end{itemize}

As an alternative, a restatement of the problem into a mixed integer nonlinear problem (MINLP) is as follows:
\begin{align*}
    \min_{\Gamma,\alpha,y} \;&\langle \alpha,\Gamma\rangle \\
    \mbox{subject to}\; & \|x_i-y_j\|^2 \le \alpha_{i,j}\\
    & \Gamma \in \{0,1\}^{n\times k}\\
    & \alpha \in [0,\max_{i,j} \|x_i-x_j\|^2]^{n\times k}\\
    & (y_i)_j \in [\min_l (x_l)_j,\max_l (x_l)_j]\\
    & \;\mbox{ for }j = 1,\ldots,d, \mbox{ and } \; i=1,\ldots,k.
\end{align*}
This above problem was implemented in the commercial solver Gurobi 10. However, it requires more than $10000$ seconds to solve the problem to a $25\%$ gap, which highlights the problem complexity for solving this with standard solvers. The results are well aligned with other studies that clearly show the difficulty of solving k-means by MINLP \cite{papageorgiou2018pseudo,kronqvist2021between}.

\begin{table*}[ht]
  \caption{Scaling performance of the method. Presented values are the relative gap at termination (rel. gap) and the sum of all branches for the number of iterations (Iters.), number of constraints at termination (No. constr.), number of extreme points in the approximate polytope at termination (Extr. points) and computational time (Comp. time) in seconds on a standard PC. }
  \vskip 0.15in
  \label{table:compeff2}
  \centering
  \begin{tabular}{lllll|ccccc|c}
    \toprule
    &&&& &\multicolumn{5}{c}{Proposed method}&Gurobi \\
  n&k&d&rel. gap&$\sigma$& Iters. & No. constr.&Extr. points&Branches&Comp. time&Comp. time\\
  \midrule
  50&3&2 &$10^{-4}$&1& 267 & 366&121600&1&54s&$>10000$s\\
  500&3&2&$10^{-4}$&1&310&407&161760&1&78s&-\\
  5000&3&2&$10^{-4}$&1&419&465&210856&1&1873s&-\\
  50&3&3&$10^{-3}$&1&1696&13931&18502128&45&6389s&-\\
  50&4&2&$10^{-3}$&0.1&251&3583&2745616&5&672s&-\\
  50&4&2&$10^{-3}$&0.5&2381&73680&52238384&150&1313s&-\\ 
    \bottomrule
  \end{tabular}
\end{table*}

\begin{table}[ht]
  \caption{Effect of adding various acceleration techniques. The strategies were also tested to be activated at various gaps. In the last test the strategies were active for different levels of relative gap, denoted $g$. Abbreviations are Symmetry breaking (SB), Least squares (LS), Constrained centroid (CC) and tight local optimum (TO). Presented values are number of iterations (Iters.), number of constraints at termination (No. constr.) and computational time (Comp. time) in seconds on a standard PC.}
  \label{table:compeff}
  \vskip 0.15in
  \centering
  \begin{tabular}{l|lll}
    \toprule
  & Iters. & No. &Comp.\\
  &  &constr. & time\\
  \midrule
  Original & 546&552&151s\\
  SB &406&414&83s\\
  SB +  LS & 189& 370&71s\\
  SB +  CC &293& 313&54s\\
  SB + CC +LS & 150 & 298&52s\\
  SB + CC +LS ($g>1$) & 267 &  366&54s\\
  +TO ($g<0.01$)&&&\\
    \bottomrule
  \end{tabular}
\end{table}

\subsubsection{Cluster separability}
The complexity of finding the optimal clustering depends on the separation between the clusters as described in Table \ref{table:sigmaeff}. The test was performed on the model problem with 500 sampled points.

\begin{table}[h]
  \caption{Effect on the separability of clusters. With increased separability the computational effort is reduced significantly. }
  \label{table:sigmaeff}
  \vskip 0.15in
  \centering
  \begin{tabular}{l|lll}
    \toprule
  $\sigma$ & Iters. & No. &Comp.\\
  &  &constr. & time\\
  \midrule
  1 (no separability)&310 &  407&78s\\
  0.5 (visible separability) &96&152&23s\\
  0.2 (clear separability)& 24&   72&9s\\
    \bottomrule
  \end{tabular}
\end{table}

\subsubsection{Scaling}
Here, we describe scaling of our method. As the QP problem has exponential complexity in $d$ and $k$ this is the bottleneck. The LP scales polynomially in $n$ having less impact on the computational time and memory consumption. The branching was performed when the number of extreme points exceeded $5\cdot 10^{5}$ in a branch to match memory constraints on the computer on which the tests were run. We used the same test problems, and the results are shown in Table \ref{table:compeff2} .

\subsection{MNIST numerals}

In this section we compare the global solution of the k-means problem on MNIST \cite{lecun1998mnist} numerals "7" and "9" compared to k-means++. As an underlying distance we use the Wasserstein distance, and the distance map is then subject to a doubly stochastic diffusion \cite{landa2021doubly} to reduce noise and to separate intertwined distributions. Representative low dimensional point clouds preserving the distances are generated using multidimensional scaling, i.e. with a distance between numerals $x_i$ and $x_j$ $d(x_i,x_j)$ we create a Euclidean point cloud representation $d(x_i,x_j)^2 = \|y_i-y_j\|_2^2$. Results in terms of the k-means objective score, Purity and NMI are presented in Table \ref{table:numerals} and the clustering is presented in Figure \ref{fig:numerals}. The example shows that when the clusters are not well separated, there are close to optimal assignments that differs significantly in their structure.

\begin{table}[ht]
  \caption{K-means clustering of 100 examples of the numerals "7" and 100 examples of "9".}
  \vskip 0.15in
  \label{table:numerals}
  \centering
  \begin{tabular}{l|lll}
    \toprule
  & k-means & Purity &NMI\\
  & objective & & \\
  \midrule
  Proposed algorithm& 278.2523&0.81&0.2985\\
  K-means++ & 281.9968 & 0.67&0.0915\\
    \bottomrule
  \end{tabular}
\end{table}

\begin{figure}[H]
\vskip 0.2in
\begin{center}
\includegraphics[width=0.32\columnwidth]{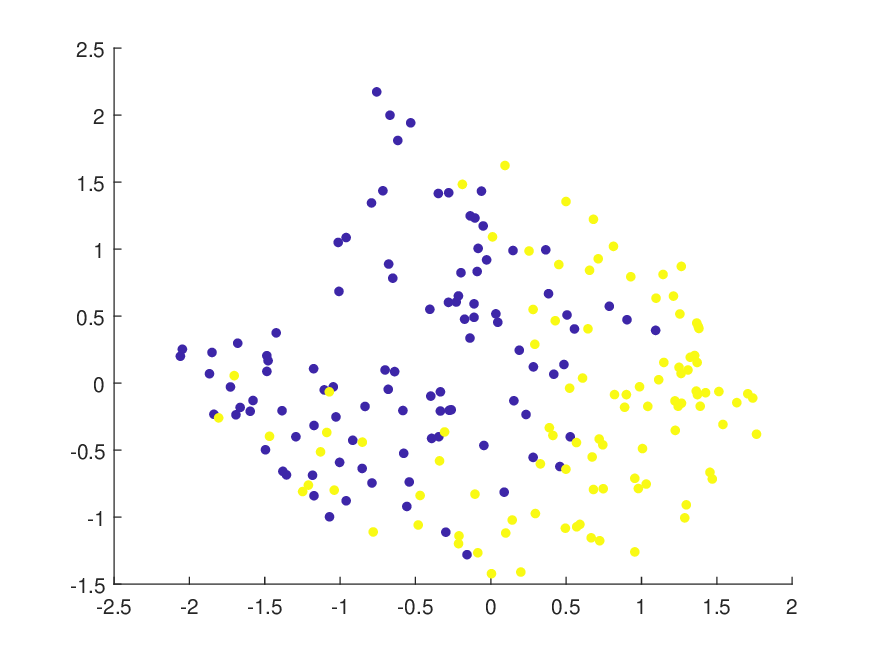}
\includegraphics[width=0.32\columnwidth]{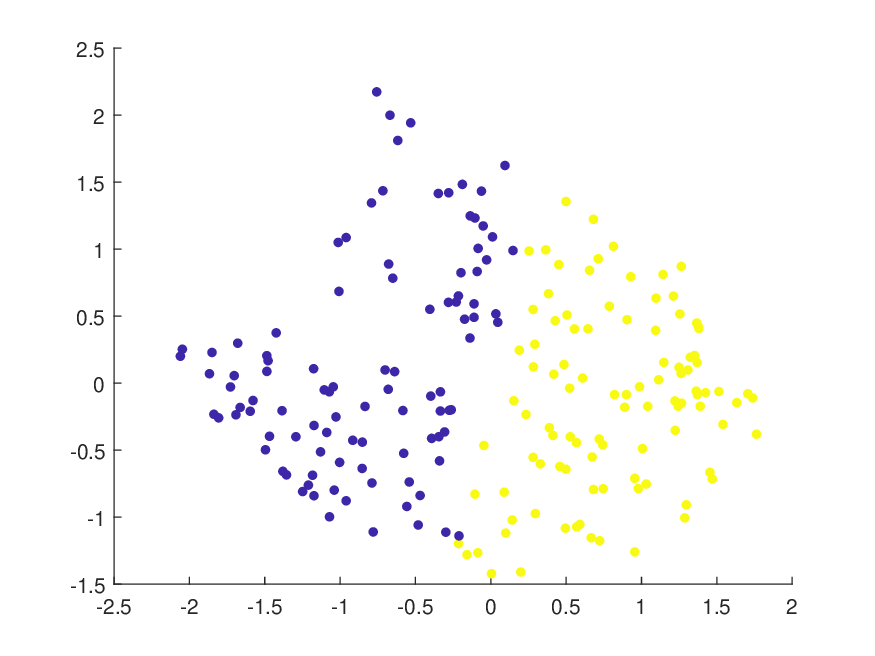}
\includegraphics[width=0.32\columnwidth]{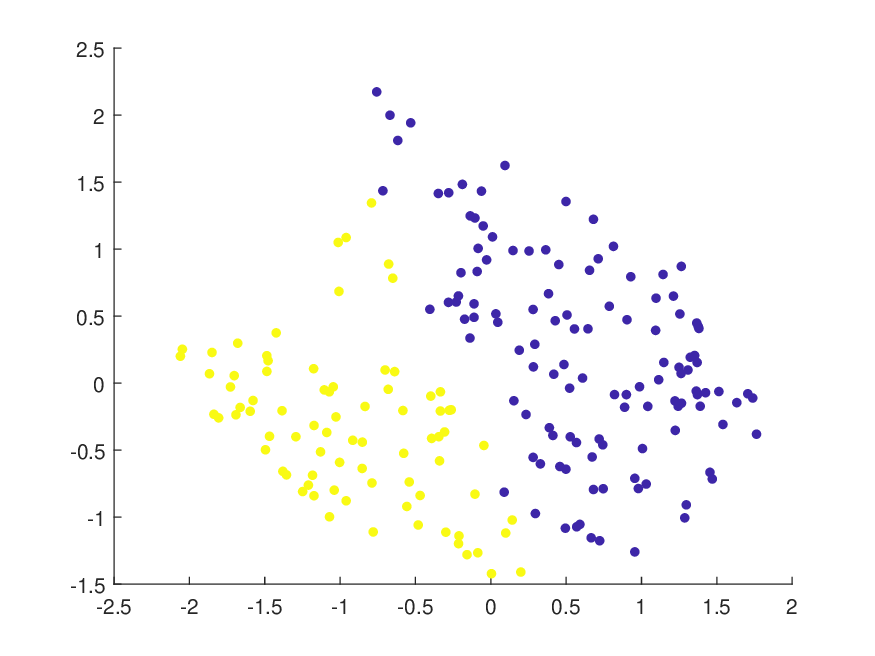}
\caption{Clustering differences using exact solution of the k-means problem and k-means++. The images show the two most significant dimensions of the multi dimensional scaling. Left image: Ground truth, middle image: Exact solution, right image: k-means++. The exact solution tends to follow the ground truth, whilst the k-means++ has found a local optimum which is not describing the clusters correctly.}
\label{fig:numerals}
\end{center}
\end{figure}

\section{Discussion}
This paper describes a method to globally solve low dimensional k-means problems. The numerical results indicate that local optimization algorithms may find local solutions far from the global optimum.
For the MNIST test problem, the global solution achieved a significantly better correspondence with the intended classification compared to the local method. Keep in mind, if the data is not well suited for k-means classification, then there are no guarantees that a global solution gives a better classification. 

In order to further strengthen the methodology, the area of polytope approximations and pruning should be investigated.
The linear optimizaton problem \eqref{eq:get_b} could be further accelerated using Sinkhorn iterations with a positive dual for the inequality constrained marginal $\Gamma^T\ett\ge \ett$, requiring a cross over methodology to transfer from an interior point estimate to a binary extreme point, e.g., by modifying the ideas in \cite{kaijser1998computing}. The branching technique was implemented in series for the numerical examples. A parallel implementation would increase the performance linearly.

\bibliographystyle{unsrt}  
\bibliography{references}

\end{document}